
\documentclass[letterpaper, 10 pt, conference]{ieeeconf}  

\IEEEoverridecommandlockouts                              
\overrideIEEEmargins


\usepackage{cite}
\usepackage{amsmath,amssymb,amsfonts}
\usepackage{algorithmic}
\usepackage{graphicx}
\usepackage{textcomp}
\usepackage{xcolor}
\def\BibTeX{{\rm B\kern-.05em{\sc i\kern-.025em b}\kern-.08em
    T\kern-.1667em\lower.7ex\hbox{E}\kern-.125emX}}
\newtheorem{thm}{Theorem }
\newtheorem{prop}{Proposition}
\newtheorem{prob}{Problem}
\newtheorem{lem}{Lemma }
\newtheorem{defn}{Definition}
\newtheorem{rem}{Remark }
\newtheorem{ass}{Assumption}
\newtheorem{ex}{Example}[section]

\usepackage[ruled,vlined]{algorithm2e}
\usepackage{makecell}

\newcommand{\Real}{\mathbb R}
\newcommand{\eps}{\varepsilon}
\newcommand{\abs}[1]{\left\vert#1\right\vert}
\newcommand{\norm}[1]{\left\Vert#1\right\Vert}
\newcommand{\ra}{\rightarrow}
\newcommand{\set}[1]{\left\{#1\right\}}

\newcommand{\A}{\mathcal{A}}
\newcommand{\D}{\mathcal{D}}

\newcommand{\W}{\mathcal{W}}
\let\subset\subseteq

\title{\LARGE \bf
Towards Learning and Verifying Maximal Neural Lyapunov Functions}

\author{Jun Liu, Yiming Meng, Maxwell Fitzsimmons, and Ruikun Zhou 
\thanks{This research was supported in part by an NSERC Discover Grant, an Ontario Early Researcher Award, and the Canada Research Chairs program. This research was enabled in part by support provided by the Digital Research Alliance of Canada (alliance.ca).}
\thanks{The authors are with the Department of Applied Mathematics, Faculty of Mathematics, 
        University of Waterloo, Waterloo, Ontario N2L 3G1, Canada.  Emails: \texttt{j.liu@uwaterloo.ca, yiming.meng@uwaterloo.ca, mfitzsimmons@uwaterloo.ca, ruikun.zhou@uwaterloo.ca}
        }%
}

\begin{document}

\maketitle
\thispagestyle{empty}
\pagestyle{empty}

\begin{abstract}
The search for Lyapunov functions is a crucial task in the analysis of nonlinear systems. In this paper, we present a physics-informed neural network (PINN) approach to learning a Lyapunov function that is nearly maximal for a given stable set. A Lyapunov function is considered nearly maximal if its sub-level sets can be made arbitrarily close to the boundary of the domain of attraction. We use Zubov's equation to train a maximal Lyapunov function defined on the domain of attraction. Additionally, we propose conditions that can be readily verified by satisfiability modulo theories (SMT) solvers for both local and global stability. We provide theoretical guarantees on the existence of maximal Lyapunov functions and demonstrate the effectiveness of our computational approach through numerical examples.
\end{abstract}
\begin{keywords}
Learning, formal verification, neural networks, nonlinear systems, stability analysis, Zubov's theorem
\end{keywords}

\section{Introduction}

Recent advancements in neural networks and machine learning have revolutionized the landscape of computational research. With the availability of these tools, researchers have been able to develop sophisticated machine learning models that can perform complex tasks with remarkable accuracy. These models are being used in various fields, including image and speech recognition, natural language processing, protein structure prediction and design, and even drug discovery.

Systems and control can potentially leverage the availability of these computational tools to aid the analysis and design of control systems. One of the longstanding challenges in nonlinear control is the construction of Lyapunov functions for stability analysis and controller design. Since Lyapunov's original work \cite{lyapunov1992general} over a hundred years ago, researchers have been searching for constructive approaches to designing Lyapunov functions, and both analytical \cite{haddad2008nonlinear,sepulchre2012constructive} and computational approaches \cite{giesl2007construction, giesl2015review} have been investigated.

In this paper, we propose a neural network approach to constructing Lyapunov functions. In contrast to prior work, we aim to: 1) compute neural Lyapunov functions that can approximate the entire domain of attraction for an asymptotically stable compact set, and 2) formally verify the satisfaction of Lyapunov conditions, both locally and globally. Together, the proposed algorithms provide verified regions of attraction that are close to the domain of attraction, as illustrated by the numerical examples.

\subsection{Related Work}

The computation of Lyapunov functions has a long history, and while it is impossible to mention all related work, we refer readers to a recent survey on computational methods for Lyapunov functions \cite{giesl2015review}, as well as to a more recent survey \cite{dawson2022safe} on the learning of neural Lyapunov functions. 

We would like to highlight a few works that are closely related to ours and have inspired our work. The work in \cite{kang2021data} proposed a data-driven approach for estimating the domain of attraction using Zubov's equation, which is a partial differential equation (PDE) that characterizes the maximal Lyapunov function \cite{vannelli1985maximal} defined on the domain of attraction \cite{zubov1964methods}. However, the approach taken in \cite{kang2021data} does not directly solve Zubov's PDE. Instead, a purely data-driven approach is taken, which relies on simulations of trajectories.

In contrast, the work in \cite{grune2021computing} uses an approach closer to physics-informed neural networks (PINNs) \cite{raissi2019physics} for approximating a solution to Zubov's equation. However, the approach in \cite{grune2021computing} is local in nature, and the Lyapunov conditions used to train the Lyapunov functions are essentially conditions for local exponential stability. Both papers \cite{kang2021data} and \cite{grune2021computing} discuss the potential of using neural networks to break the curse of dimensionality in approximating Lyapunov functions, and our work shares the same optimism \cite{poggio2017and} without discussing this aspect. 

The work in \cite{jones2021converse}, even though focusing on sums of squares (SOS) approaches for approximating Lyapunov functions, is closely related to our work. The partial differential inequality constraint that the authors used to optimize the polynomial Lyapunov functions takes the form of Zubov's PDEs, although not explicitly mentioned as such. 

Neural Lyapunov functions for control are investigated in \cite{chang2019neural}, where the authors use satisfiability modulo theories (SMT) solvers to verify that Lyapunov conditions are met by neural Lyapunov functions. The work was extended to cover systems with unknown dynamics in \cite{zhou2022neural}, while still offering stability guarantees. The search for Lyapunov functions in \cite{chang2019neural, zhou2022neural}, similar to \cite{grune2021computing}, is local in nature. Furthermore, in both \cite{chang2019neural, zhou2022neural}, SMT verification for Lyapunov stability conditions omits a region around the origin. We formally address this issue here as a side result. 

\section{Preliminaries}

\subsection{Set Stability}

Consider an autonomous nonlinear system of the form
\begin{equation}
    \label{eq:sys}
    \dot x  = f(x),
\end{equation}
where $f:\,\Real^n\ra\Real^n$ is a locally Lipschitz function. We are interested in characterizing uniform asymptotic stability of any compact invariant set for system (\ref{eq:sys}) using Lyapunov functions approximated by neural networks. We denote the unique solution to (\ref{eq:sys}) from the initial condition $x(0)=x_0$ by $\phi(t,x_0)$ for $t\in J$, where $J$ is the maximal interval of existence for $\phi$.  

\begin{defn}[Invariant set]
A set $\A\subset\Real^n$ is said to be a \emph{positively invariant set} of (\ref{eq:sys}) if all solutions of (\ref{eq:sys}) starting in $\A$ remain in $\A$ in positive time, i.e., $x_0\in \A$ implies $\phi(t,x_0)\in \A$ for all $t\ge 0$. 
\end{defn}

Since we only consider positively invariant sets in this paper, we will simply refer to them as invariant sets. 
The distance of a point $x\in\Real^n$ to a set $\A\subset \Real^n$ is defined as $\norm{x}_{\A}:=\inf_{y\in \A}\norm{x-y}$. 

\begin{defn}[Set stability]\label{def:stability}
A set $\A\subset\Real^n$ is said to be \textit{uniformly asymptotically stable} (UAS) for (\ref{eq:sys}) if the following two conditions are met:
\begin{enumerate}[(1)]
    \item For every $\eps>0$, there exists a $\delta>0$ such that $\norm{x_0}_\A<\delta$ implies $\norm{\phi(t,x_0)}_\A<\eps$ for all $t\ge 0$; and 
    \item There exists some $\rho>0$ such that, for every $\eps>0$, there exists some $T>0$ such that $\norm{\phi(t,x_0)}_\A<\eps$ whenever $\norm{x_0}_\A<\rho$ and $t\ge T$. 
\end{enumerate}
\end{defn}

Note that if a closed set $\A$ is stable (i.e., satisfying item 1) in Definition \ref{def:stability}), then it has to be positively invariant \cite[Theorem 1.6.6]{bhatia1967stability}. While the definition above is for any set $\A$, it is of more interest to consider stability of closed sets, because stability may be trivially satisfied if the set $\A$ is not closed (see \cite[Remark 1.6.7 and Example 1.6.8]{bhatia1967stability}). 

The definition above requires that there exists a neighborhood of $\A$ that are uniformly attracted to $\A$. To capture the largest region that is attracted to a uniformly asymptotically stable set $\A$, the domain of attraction of $\A$ is defined as follows. 

\begin{defn}[Domain of Attraction]
    Given a set $\A\subset \Real^n$ that is uniformly asymptotically stable for (\ref{eq:sys}), the \textit{domain of attraction} of $\A$ with respect to (\ref{eq:sys}) is defined as
    $$
    \D(\A): = \set{x\in\Real^n:\,\lim_{t\ra \infty}\norm{\phi(t,x)}_{\A} = 0}.   
    $$ 
    Any invariant subset of $\D(\A)$ is called a \textit{region of attraction}. 
\end{defn}

We know that the domain of attraction of any set is an open set \cite[Theorem 1.6.31]{bhatia1967stability}. 

We shall use the following standing assumption of the paper. 

\begin{ass}
We have a compact set $\A$ that is uniformly asymptotically stable for (\ref{eq:sys}). 
\end{ass}

\subsection{Maximal Lyapunov Function and Zubov's Theorem}

The domain of attraction can be characterized by a maximal Lyapunov function \cite{vannelli1985maximal} as described in the following theorem. 

\begin{thm}\label{thm:max_lyap}
	Let $D\subset\Real^n$ be an open set. Suppose that there exists a continuous function $V:\,D\ra \Real$ such that $\A\subset D$ and the following conditions hold:
 \begin{enumerate}
     \item $V$ is positive definite on $D$ with respect to $\A$, i.e., $V(x)=0$ for all $x\in \A$ and $V(x)>0$ for all $x\in D\setminus \A$;
     \item the derivative of $V$ along solutions of (\ref{eq:sys}), given by
     $$
     \dot V(x):= \lim_{t\ra 0+} \frac{V(\phi(t,x))-V(x)}{t}, 
     $$
     is well defined for all $x\in D$ and satisfies 
     \begin{equation}\label{eq:DV1}
     \dot V(x) = - \Phi(x),    
     \end{equation}
     where $\Phi:\,D\ra\Real$ continuous and positive definite with respect to $\A$;
     \item $V(x)\ra \infty$ as $x\ra \partial D$ or $\norm{x}\ra \infty$. Then $D=\D(\A)$. 
 \end{enumerate}
\end{thm}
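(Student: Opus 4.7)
The plan is to establish $D=\D(\A)$ by proving the two inclusions $D\subset\D(\A)$ and $\D(\A)\subset D$ separately, using $V$ to control both the location and the asymptotic behaviour of trajectories.

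The first ingredient is that each sublevel set $D_c:=\{x\in D:V(x)\le c\}$ is a compact subset of $D$: it is closed in $D$ by continuity of $V$, and condition~(3) forces any sequence in $D_c$ to stay bounded and bounded away from $\partial D$, so $D_c$ is compact in $\Real^n$ and sits inside $D$. Condition~(2) makes $V$ non-increasing along solutions, so $D_c$ is forward invariant; in particular every trajectory starting in $D$ exists for all $t\ge 0$ and remains in the compact set $D_{V(x_0)}$.

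For $D\subset\D(\A)$, I would run a LaSalle-type argument. Given $x_0\in D$, the map $t\mapsto V(\phi(t,x_0))$ is non-increasing and bounded below by $0$, hence converges to some $V^*\ge 0$. The $\omega$-limit set $\omega(x_0)\subset D_{V(x_0)}$ is nonempty, compact, and invariant; continuity gives $V\equiv V^*$ on $\omega(x_0)$, and invariance forces $\dot V\equiv 0$ along trajectories in $\omega(x_0)$, so $\Phi\equiv 0$ there. Positive definiteness of $\Phi$ with respect to $\A$ yields $\omega(x_0)\subset\A$, and hence $\phi(t,x_0)\to\A$, i.e., $x_0\in\D(\A)$.

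For the reverse inclusion I would argue by contradiction. If $x_0\in\D(\A)\setminus D$, then $\phi(t,x_0)\to\A\subset D$ forces $T^*:=\inf\{t\ge 0:\phi(t,x_0)\in D\}$ to satisfy $0<T^*<\infty$, and openness of $D$ together with continuity of $\phi$ places $\phi(T^*,x_0)\in\partial D$. Forward invariance from the first step gives $\phi(t,x_0)\in D$ for all $t>T^*$, so condition~(3) forces $V(\phi(t,x_0))\to\infty$ as $t\downarrow T^*$. The integral identity $V(\phi(t_1,x_0))-V(\phi(t_2,x_0))=\int_{t_1}^{t_2}\Phi(\phi(s,x_0))\,ds$ for $T^*<t_1<t_2$ is then meant to deliver the contradiction, pitting the divergent left side against a right side that is controlled on compact subarcs of the trajectory.

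I expect the reverse inclusion to be the main obstacle. Since $\Phi$ is only assumed continuous on $D$, the integrand $\Phi(\phi(s,x_0))$ could itself blow up as $s\downarrow T^*$, and the naive bound on the right side may fail. The cleanest way to close the argument is to read condition~(3) in the spirit of Vannelli--Vidyasagar, extending $V$ by $+\infty$ outside $D$ so that $D=\{V<\infty\}$ by construction; then continuity of $V$ in the extended sense, combined with $\phi(t,x_0)\to\A$ and the monotonicity of $V$ along the forward trajectory, forces $V(x_0)<\infty$ and hence $x_0\in D$.
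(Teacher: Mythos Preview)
The paper does not actually prove Theorem~\ref{thm:max_lyap}; it is quoted from \cite{vannelli1985maximal} and stated without proof, so there is no in-paper argument to compare against. Your proposal can still be assessed on its own merits.

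Your forward inclusion $D\subset\D(\A)$ is correct and standard: compactness of the sublevel sets $D_c$ follows from condition~(3), forward invariance from $\dot V\le 0$, and the LaSalle argument pins the $\omega$-limit set inside $\set{\Phi=0}=\A$.

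The reverse inclusion is where the real content lies, and you correctly isolate the obstruction: with $\Phi$ only assumed continuous on $D$, the integral $\int_{t_1}^{t_2}\Phi(\phi(s,x_0))\,ds$ may diverge as $t_1\downarrow T^*$ right along with $V(\phi(t_1,x_0))$, so no contradiction is forced. Your proposed fix, however, does not close this gap. Extending $V$ by $+\infty$ off $D$ and invoking monotonicity yields only $V(x_0)\ge V(\phi(t,x_0))$, which is vacuous when $V(x_0)=+\infty$; the inequality runs the wrong way to conclude $V(x_0)<\infty$. Continuity of the extended $V$ does not help either, since a continuous $[0,\infty]$-valued map along the trajectory can perfectly well start at $+\infty$ and decrease to finite values.

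What actually makes the reverse inclusion go through in \cite{vannelli1985maximal} is that $\Phi$ is taken to be continuous and positive definite on all of $\Real^n$, not merely on $D$. Under that hypothesis the compact arc $\set{\phi(s,x_0):s\in[T^*,t_2]}\subset\Real^n$ carries a bounded $\Phi$, the integral stays finite as $t_1\downarrow T^*$, and the contradiction with $V(\phi(t_1,x_0))\to\infty$ is genuine. The paper's phrasing ``$\Phi:D\to\Real$'' is slightly weaker than the cited source in precisely the place your argument needs; you should either strengthen the hypothesis on $\Phi$ to match \cite{vannelli1985maximal} or flag the discrepancy explicitly.
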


Under mild assumptions, the existence of a maximal Lyapunov function on $\D(\A)$ is also necessary \cite{vannelli1985maximal}, following a converse Lyapunov argument \cite{massera1956contributions}. Furthermore, if $f$ is Lipschitz continuous on $\D(A)$, then such a $V$ can be chosen to be continuously differentiable\footnote{While the results in \cite{vannelli1985maximal} were proved for case of an asymptotically stable equilibrium point for (\ref{eq:sys}), they can be readily extended to set stability.}. 

Theorem \ref{thm:max_lyap} is closely related, and in fact equivalent, to Zubov's theorem \cite{zubov1964methods} stated below. 

\begin{thm}\label{thm:zubov}
Let $D\subset\Real^n$ be an open set containing $\A$. Then $D=\D(\A)$ if and only if there exists two continuous functions $W:\,D\ra \Real$ and $\Psi:\,D\ra \Real$ such that the following conditions hold:
 \begin{enumerate}
     \item $0<W(x)<1$ for all $x\in D$ and $W(x)=0$ for all $x\in \A$; 
     \item $\Psi$ is positive definite on $D$ with respect to $\A$;  
     \item for any sufficiently small $c_3>0$, there exist $c_1$ and $c_2$ such that $\norm{x}_\A\ge c_3$ implies 
     $W(x)>c_1$ and $\Psi(x)>c_2$;  
     \item $W(x)\ra 1$ as $x\ra y$ for any $y\in \partial D$;
     \item $W$ and $\Psi$ satisfy
     \begin{equation}\label{DV:zubov}
     \dot W (x) = -\Psi(x)(1-W(x)), 
     \end{equation}
     where $\dot W$ is the derivative of $W$ along solutions of (\ref{eq:sys}) as defined in item 2 of Theorem \ref{thm:max_lyap}. 
 \end{enumerate}
\end{thm}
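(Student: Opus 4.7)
The plan is to deduce Zubov's theorem (Theorem \ref{thm:zubov}) from the maximal Lyapunov function characterization (Theorem \ref{thm:max_lyap}) by establishing a pointwise bijection between the two Lyapunov candidates. Specifically, I would work with the transformation $W = 1 - e^{-V}$ (equivalently $V = -\ln(1-W)$), which maps a continuous $V: D \to [0,\infty)$ blowing up at $\partial D$ to a continuous $W: D \to [0,1)$ approaching $1$ at $\partial D$, and vice versa. A direct Dini-derivative computation shows that $\dot W = -\Psi(1-W)$ translates to $\dot V = -\Psi$, so $\Psi$ takes the role of the $\Phi$ in Theorem \ref{thm:max_lyap}.

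For the necessity ($\Rightarrow$) direction, assume $D = \D(\A)$. By the converse statement noted after Theorem \ref{thm:max_lyap}, one obtains a continuous (indeed $C^1$, since $f$ is locally Lipschitz) maximal Lyapunov function $V$ on $D$ satisfying its items 1--3 for some positive-definite $\Phi$. Setting $W := 1 - e^{-V}$ and $\Psi := \Phi$, items 1, 2, 4, and 5 of Theorem \ref{thm:zubov} follow from positive definiteness of $V$, the choice $\Psi = \Phi$, the blow-up of $V$ at $\partial D$, and the chain rule $\dot W = e^{-V}\dot V$, respectively. Item 3 requires uniform lower bounds of $W$ and $\Psi$ on $\set{x \in D:\,\norm{x}_\A \ge c_3}$, which I would extract from positive definiteness together with the growth condition in Theorem \ref{thm:max_lyap}(3).

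For the sufficiency ($\Leftarrow$) direction, I would define $V := -\ln(1-W)$ and verify the hypotheses of Theorem \ref{thm:max_lyap}. Items 1 and 4 of Zubov's theorem yield positive definiteness of $V$ and $V \to \infty$ at $\partial D$. The chain-rule computation $\dot V = \dot W/(1-W) = -\Psi$ supplies item 2 of Theorem \ref{thm:max_lyap} with $\Phi := \Psi$, which is continuous and positive definite by Zubov's item 2. Invoking Theorem \ref{thm:max_lyap} then gives $D = \D(\A)$.

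The main difficulty will be the asymptotic behavior when $D$ is unbounded, since Theorem \ref{thm:max_lyap}(3) demands $V(x) \to \infty$ also as $\norm{x} \to \infty$ while Zubov's item 4 only addresses the topological boundary. I expect to close this gap by leveraging item 3 of Theorem \ref{thm:zubov} together with a trapping argument using compact sublevel sets of $W$ inside $D$, which forces the unbounded limit $W(x) \to 1$ to be inherited from the properness built into the converse of Theorem \ref{thm:max_lyap}. A secondary technical point is justifying the chain rule when $V$ is only continuous; this is handled by observing that the Dini-type limit defining $\dot V$ in Theorem \ref{thm:max_lyap} commutes with the strictly monotone, locally Lipschitz map $V \mapsto 1 - e^{-V}$.
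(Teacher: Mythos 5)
First, a point of reference: the paper does not actually prove Theorem \ref{thm:zubov}; it is quoted from Zubov's work, and the in-paper material closest to a proof is Remark \ref{rem:vw} (the scalar transformation between $V$ and $W$) together with Propositions \ref{prop:V} and \ref{prop:W}, which constructively establish only the ``only if'' (necessity) direction. Your necessity argument follows essentially that route and is sound, modulo one detail: item 3 demands a uniform lower bound $\Psi(x)>c_2$ on the \emph{non-compact} set $\set{x\in D:\,\norm{x}_\A\ge c_3}$, and positive definiteness plus continuity of $\Phi$ do not supply this; the bound must be built into the choice of $\Phi$ in the converse construction (the paper does so via Assumption \ref{as:finite}(1) on $\omega$ and the minimum-transit-time argument in the proof of Proposition \ref{prop:W}). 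The corresponding bound $W>c_1$ does follow from properness of $V$, as you indicate.

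The genuine gap is in the sufficiency direction, and it is exactly the one you flag but do not close. Theorem \ref{thm:max_lyap}(3) requires $V(x)\ra\infty$ as $\norm{x}\ra\infty$, which does not follow from items 1--5 when $D$ is unbounded: item 4 controls only the topological boundary, and item 3 gives only a fixed bound $W>c_1$ with $c_1<1$, so $V=-\ln(1-W)$ can stay bounded along a sequence escaping to infinity inside $D$. Your proposed repair does not work: the sublevel sets $\set{x\in D:\,W(x)\le c}$ need not be compact precisely because $W$ need not tend to $1$ at infinity, and ``the properness built into the converse of Theorem \ref{thm:max_lyap}'' is a statement about the necessity direction, so invoking it here is circular. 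The standard resolution is a direct dynamical argument that bypasses Theorem \ref{thm:max_lyap}: along a solution in $D$, equation (\ref{DV:zubov}) gives $\frac{d}{dt}\bigl(1-W(\phi(t,x))\bigr)=\Psi(\phi(t,x))\bigl(1-W(\phi(t,x))\bigr)$, hence $\int_0^{t}\Psi(\phi(s,x))\,ds\le-\ln(1-W(x))<\infty$ for all $t$; the uniform lower bound on $\Psi$ from item 3 then forces the solution to spend only finite total time outside any neighborhood of $\A$, and uniform asymptotic stability of $\A$ converts this into attraction, yielding $D\subset\D(\A)$, while item 4 and the monotonicity of $W$ along solutions give the reverse inclusion. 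A correct write-up must either use this trajectory argument for the unbounded case or add a hypothesis making $D$ bounded, in which case your reduction to Theorem \ref{thm:max_lyap} goes through.
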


\begin{rem}\label{rem:vw}\em 
Theorem \ref{thm:max_lyap} and Theorem \ref{thm:zubov} can be related by the following equation
\begin{equation}\label{eq:V2W}
    W(x) = 1 - \exp(-\alpha V(x)), 
\end{equation}
for some constant $\alpha>0$.  It is easy to verify that $V$ satisfying (\ref{eq:DV1}) implies 
 \begin{align*}
 \dot W(x) &= \alpha\exp(-\alpha V(x))\dot{V}\\
 &=-\alpha\exp(-\alpha V(x))\Phi(x) = -\alpha(1-W(x)) \Phi(x),
 \end{align*}
 which verifies (\ref{DV:zubov}) with $\Psi=\alpha\Phi(x)$. Conversely, one can take 
 \begin{equation}\label{eq:W2V}
 V(x)=-\ln(1-W(x))/\alpha 
 \end{equation}
 to verify (\ref{eq:DV1}) using (\ref{DV:zubov}).  
 Clearly, the choice of such transformation from $V$ to $W$, and vice versa, is not unique. For example, we can also define 
 \begin{equation}\label{eq:V2W2}
    W(x) = \tanh(\alpha V(x)). 
\end{equation}
Then we also have
 \begin{align*}
 \dot W(x) &= \alpha(1-\tanh^2(\alpha V(x)))\dot{V}\\
 & = -\alpha(1-W(x))(1+W(x))\Phi(x),
 \end{align*}
 which verifies (\ref{DV:zubov}) with $\Psi=\alpha(1+W(x))\Phi(x)$. It is obvious that there are infinitely many such scalar transformations (see Section \ref{sec:constructW}). Hence, Zubov's PDE (\ref{DV:zubov}) is not unique and we need to specify $\Psi$ before solving it.  
\end{rem}

\begin{rem}\em 
One potential advantage of Zubov's construction of a Lyapunov function, namely $W(x)$, is that it is bounded and its value approaches one as $x$ approaches the boundary, whereas the function $V(x)$ in Theorem \ref{thm:max_lyap} has values approaching infinity as $x$ tends to the boundary. A bounded function can offer potential advantages in numerical approximations, especially when we are interested in solving a PDE to find a Lyapunov function, as will be described later in this paper. The fact that $W(x)$ approaches a finite value as $x$ approaches the boundary of its domain also makes it possible to extend its domain to the entire space $\Real^n$ (or any desired set on which computation takes place).
\end{rem}

Let $W:D\ra\mathbb R$ be a function satisfying the conditions in Zubov's theorem, and then $D=\D(\A)$. Suppose that the definition of $W$ is extended to a set $X$ containing $\D(\A)$ with $W(x)=1$ for all $x\in X\setminus \D(\A)$. Clearly, the domain of attraction of $\A$ is completely characterized by the strict sublevel-1 set of $W$, i.e.,
$$
\D(\A) = \set{x\in X:\, W(x)<1}. 
$$

\subsection{Problem Statement}

The objective of this paper is to compute neural network approximations of a maximal Lyapunov function in the form of a function satisfying Zubov's PDE (\ref{DV:zubov}), and use these neural Lyapunov functions to certify regions of attraction for $\A$, which are characterized by sublevel sets of the neural Lyapunov functions. We rely on satisfiability modulo theories (SMT) solvers to verify that the Lyapunov conditions are met. Since verification is done over bounded domains and neural approximations offer guarantees over compact domains, we put forward the following assumption.

\begin{ass}
    We have  $\D(\A)\subset X$ for some compact set $X$. 
\end{ass}

It is well known that the boundary of $\D(\A)$ consists of complete trajectories of (\ref{eq:sys}) \cite{bhatia1967stability}. For instance, in a two-dimensional system, a period orbit can bound a domain of attraction for a stable equilibrium point. 

If the set $\D(\A)$ is unbounded and suppose that $\A$ is contained in the interior of $X$, the proposed algorithms can still certify regions of attractions inside $X$, although losing guarantees of approximating $\D(\A)$. 

The main problem to be addressed is stated next. 

\begin{prob}
Given a uniformly asymptotically stable set $\A$,  compute neural Lyapunov functions that can certify regions of attraction for $
\A$ with provable guarantees. 
\end{prob}

\section{A Constructive Maximal Converse Lyapunov Function}\label{sec:construct}

In this section, we use ideas from converse Lyapunov function theory to explicity construct a maximal Lyapunov-barrier function for a given uniformly asymptotically set $\A$ and establish its properties. 

\subsection{Construction of $V$}

Let $\omega:\,\Real^n\ra \Real$ be a continuous function that is positive definite with respect to $\A$. Define 
\begin{equation}
    \label{eq:V}
V(x)=\int_0^\infty \omega(\phi(t,x))dt,\quad x\in \Real^n, 
\end{equation}
where if the integral diverges, we let $V(x)=\infty$. 

The following is assumed for the function $\omega$ and the resulting function $V$ defined by (\ref{eq:V}).

\begin{ass}\label{as:finite}
The following items hold true:
\begin{enumerate}
    \item For any $\delta>0$, there exists $c>0$ such that $\omega(x)>c$ for all $\norm{x}_\A>\delta$. 
    \item There exists some $\rho>0$ such that the integral $V(x)$ defined by (\ref{eq:V}) converges for all $x$ such that $\norm{x}_\A<\rho$. 
    \item For any $\eps>0$, there exists $\delta>0$ such that $\norm{x}_\A<\delta$ implies $V(x)<\eps$. 
\end{enumerate}
\end{ass}

\begin{rem}\em 
We can show that Assumption \ref{as:finite} holds if the set $\A$ is locally exponentially stable and $\omega$ is locally Lipschitz. A typical choice for $\omega$ is given by $\omega(x)=\norm{x}_\A$. 
\end{rem}

Under this assumption, we can verify that $V$ satisfies the conditions for a maximal Lyapunov function stated in Theorem \ref{thm:max_lyap}. 

\begin{prop}\label{prop:V}
The function $V:\,\Real^n\ra\Real\cup\set{\infty}$ defined by (\ref{eq:V}) satisfies the following: 
\begin{enumerate}
    \item $V(x)<\infty$ if and only if $x\in \D(\A)$;
    \item $V(x)\ra \infty$ as $x\ra \partial \D(\A)$; 
    \item $V$ is positive definite with respect to $\A$; 
    \item $V$ is continuous on $\D(\A)$ and 
    \begin{equation}
        \dot{V}(x) = -\omega(x),
    \end{equation}
    for all $x\in \D(\A)$. 
\end{enumerate}
\end{prop}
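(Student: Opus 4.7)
The plan is to verify the four items in the order (3), (1), (4), (2), because each step supplies a tool for the next. The semigroup identity
\begin{equation*}
V(\phi(s,x)) \;=\; \int_0^\infty \omega(\phi(t+s,x))\,dt \;=\; \int_s^\infty \omega(\phi(\tau,x))\,d\tau,
\end{equation*}
together with the uniform asymptotic stability of $\A$, continuous dependence of $\phi$ on initial data, and the three items of Assumption \ref{as:finite}, will do almost all of the work.

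For item (3), I would argue first that $V$ vanishes on $\A$: since $\A$ is (positively) invariant (by stability of the closed set, as noted after Definition \ref{def:stability}) and $\omega$ vanishes on $\A$, the integrand is identically zero when $x\in\A$. Conversely, if $V(x)=0$, the nonnegative continuous function $t\mapsto \omega(\phi(t,x))$ must be identically zero, so by Assumption \ref{as:finite}(1) the orbit stays in $\A$; in particular $x\in\A$. Continuity of $V$ at points of $\A$ is exactly item (3) of Assumption \ref{as:finite}. For item (1), if $x\in\D(\A)$, pick $T>0$ so that $\norm{\phi(T,x)}_\A<\rho$ (with $\rho$ from Assumption \ref{as:finite}(2)); split $V(x)$ into $\int_0^T$, which is finite by continuity of $\omega$ along the compact orbit segment, plus $V(\phi(T,x))$, which is finite by Assumption \ref{as:finite}(2). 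Conversely, if $x\notin \D(\A)$, then $\phi(t,x)$ cannot enter the $\rho$-neighborhood of $\A$ guaranteed by Definition \ref{def:stability}(2), because once inside that neighborhood the trajectory would have to converge to $\A$; hence $\norm{\phi(t,x)}_\A\ge \rho$ for all $t\ge 0$, and Assumption \ref{as:finite}(1) gives $\omega(\phi(t,x))\ge c>0$, forcing $V(x)=\infty$.

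For item (4), the derivative formula is immediate from the semigroup identity, which yields
\begin{equation*}
\dot V(x) \;=\; \lim_{s\to 0+}\frac{1}{s}\Bigl(V(\phi(s,x))-V(x)\Bigr)\;=\;-\lim_{s\to 0+}\frac{1}{s}\int_0^s \omega(\phi(\tau,x))\,d\tau \;=\; -\omega(x),
\end{equation*}
using continuity of $\omega$ at $x$. Continuity of $V$ on $\D(\A)$ is the more delicate point: fix $x_0\in \D(\A)$, choose $T$ large so that $\phi(T,x_0)$ is deep inside the neighborhood where Assumption \ref{as:finite}(3) gives arbitrarily small integrals of $V$, and use continuous dependence of the flow on initial conditions to ensure that for $x$ near $x_0$ both $\phi(T,x)$ is close to $\phi(T,x_0)$ (so $V(\phi(T,x))$ is small uniformly) and $\int_0^T \omega(\phi(t,x))\,dt$ is close to $\int_0^T \omega(\phi(t,x_0))\,dt$ by uniform continuity of $\omega$ on the compact tube swept out by nearby trajectories on $[0,T]$.

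Finally, for item (2), I would use the same boundary-trajectory observation as in the converse half of (1). If $y\in\partial \D(\A)$, then $y\notin \D(\A)$ (since the domain of attraction is open), and the argument above gives $\norm{\phi(t,y)}_\A\ge \rho$ for every $t\ge 0$. Continuous dependence on initial conditions then implies that, for any prescribed horizon $T$ and any $x$ sufficiently close to $\partial \D(\A)$, the trajectory $\phi(\cdot,x)$ stays at distance $\ge \rho/2$ from $\A$ throughout $[0,T]$, so Assumption \ref{as:finite}(1) yields $V(x)\ge c'T$ with $c'>0$ independent of $T$; letting $T\to\infty$ as $x\to\partial \D(\A)$ gives the claim. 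The main obstacle I anticipate is precisely this last step: making the continuous-dependence estimate uniform in $y$ as $y$ ranges over $\partial \D(\A)$, which I would handle by working locally (the statement $V(x)\to\infty$ as $x\to\partial \D(\A)$ is a pointwise condition along sequences approaching the boundary) and exploiting local compactness, avoiding any global compactness hypothesis on $\partial \D(\A)$.
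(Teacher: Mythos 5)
Your proof is correct. For items (1), (3), and the derivative formula in (4) it coincides in substance with the paper's argument: finiteness of $V$ on $\D(\A)$ by splitting the integral at a time $T$ after which the orbit lies in the $\rho$-neighbourhood of $\A$ and invoking the semigroup identity, divergence off $\D(\A)$ from the uniform lower bound on $\omega$ away from $\A$, and the computation of $\dot V$ by a change of variables in the integral. The genuine differences are in item (2) and in the continuity half of item (4). For (2), the paper fixes a sequence $x_n\to y\in\partial\D(\A)$, introduces the first hitting time $T_n$ of the set $\set{\norm{x}_\A\le\delta}$, bounds $V(x_n)\ge cT_n$, and rules out boundedness of $\set{T_n}$ by a compactness/contradiction step (a convergent subsequence $T_n\to T$ would force $\phi(T,y)\in\D(\A)$ and hence $y\in\D(\A)$, contradicting openness). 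You instead show that the boundary orbit satisfies $\norm{\phi(t,y)}_\A\ge\rho$ for all $t$ and transfer this to nearby initial conditions over any finite horizon $[0,T]$ by continuous dependence, obtaining $V(x)\ge c'T$ with $c'$ independent of $T$; both routes are sound, both implicitly assume the solutions involved exist on the time intervals used, and your correct observation that the claim is pointwise in $y$ disposes of the uniformity worry without any compactness hypothesis on $\partial\D(\A)$. Finally, you actually prove continuity of $V$ on $\D(\A)$ (tail made uniformly small via item (3) of Assumption \ref{as:finite}, head handled by uniform continuity of $\omega$ on a compact tube of trajectories) and spell out positive definiteness via the invariance of $\A$; the paper asserts the former without proof and settles the latter with a one-line appeal to ``a continuity argument,'' so your write-up is more complete on these two points.
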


\begin{proof}
1) Suppose $x\in \D(\A)$. Let $\rho>0$ be from Assumption \ref{as:finite}. There exists some $T_x>0$ such that $\norm{\phi(t,x)}_{\A}<\rho$ for all $t\ge T_x$. It follows that
\begin{align*}
V(x) & = \int_0^{T_x} \omega(\phi(t,x))dt + \int_{T_x}^\infty  \omega(\phi(t,x))dt    \\
& = \int_0^{T_x} \omega(\phi(t,x))dt + \int_{0}^\infty \omega(\phi(t,\phi(T_x,x)))dt\\
& = \int_0^{T_x} \omega(\phi(t,x))dt + V(\phi(T_x,x)) < \infty,
\end{align*}
where we used finiteness of the first integral and Assumption \ref{as:finite} to conclude $V(\phi(T_x,x))<\infty$. Now suppose that $x\not \in \D(\A)$. Then $\phi(t,x)\not\in \D(\A)$ for any $t\ge 0$. Since $\D(\A)$ is open and $\A$ is compact, there exists some $\delta>0$ such that  $\norm{\phi(t,x)}_\A>\delta$ for all $t\ge 0$. By Assumption \ref{as:finite}, $V(x)=\infty$. 

2) Let $\set{x_n}$ be a sequence such that  $x_n\ra y$ for some $y\in \partial \D(\A)$. Choose $\delta\in (0,\rho)$, where $\rho>0$ is from the definition of set stability of $\A$. Let $T_n$ be the first time that $\norm{\phi(t,x_n)}_\A\le \delta$. Then by continuity, $\norm{\phi(T_n,x_n)}_\A=\delta$ and for all $t\in [0,T_n)$, we  have $\norm{\phi(t,x_n)}_\A> \delta$ and $\omega(\phi(t,x_n))>c$ for some $c>0$ because of Assumption \ref{as:finite}. Hence 
$$
V(x_n) = \int_0^\infty \omega(\phi(t,x_n)) dt \ge \int_0^{T_n} \omega(\phi(t,x_n)) dt \ge cT_n.
$$
We can conclude $V(x_n)\ra \infty$ if $T_n\ra\infty$. Suppose that this is not the case. Then $\set{T_n}$ contains a bounded subsequence, still denoted by $\set{T_n}$, that converges to $T$. It follows that $\phi(T_n,x_n)\ra \phi(T,y)$ and $\norm{\phi(T,y)}_\A=\delta<\rho$. Hence $\phi(T,y)\in \D(\A)$. It follows that $y\in \D(\A)$. Since $\D(\A)$ is open, this is a contradiction. We must have $V(x_n)\ra\infty$ as $n\ra\ \infty$. 

3) Positive definiteness of $V$ follows from positive definiteness of $\omega$ and by a continuity argument. 

4) For $x\in\D(\A)$, we have 
\begin{align*}
\dot{V}(x) & = \lim_{t\ra 0+} \frac{V(\phi(t,x))-V(x)}{t}\\
& =\lim_{t\ra 0+}\frac{\int_0^\infty \omega(\phi(s,\phi(t,x)))ds - \int_0^\infty \omega(\phi(s,x))ds}{t} \\
& =  \lim_{t\ra 0+}\frac{\int_0^\infty \omega(\phi(s+t,x))ds -\int_0^\infty \omega(\phi(s,x))ds}{t}\\
& = \lim_{t\ra 0+}\frac{\int_{t}^\infty \omega(\phi(s,x))ds - \int_0^\infty \omega(\phi(s,x))ds}{t}\\
&= \lim_{t\ra 0+} \frac{\int_{t}^0 \omega(\phi(s,x))ds }{t} = -\omega(x).
\end{align*}
The proof is complete. 
\end{proof}

\subsection{Construction of $W$}\label{sec:constructW}

Let $\beta:\,[0,\infty)\ra\Real$ satisfy 
\begin{equation}\label{eq:beta}
    \dot \beta =  (1-\beta)\psi(\beta)\,\quad \beta(0)=0,
\end{equation}
where $\psi:\,[0,\infty)\ra\Real$ is a locally Lipschitz function satisfying $\psi(s)>0$ for $s>0$.  

\begin{lem}\label{lem:beta}
Any function satisfying (\ref{eq:beta}) is continuously differentiable, strictly increasing, and satisfies $\beta(0)=0$ and  $\beta(s)\ra 1$ as $s\ra \infty$. 
\end{lem}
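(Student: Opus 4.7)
I would tackle the four claims in the natural order: $C^1$-regularity, initial value, monotonicity, and asymptotic value. Continuous differentiability is automatic: since $\beta$ satisfies (\ref{eq:beta}), its derivative equals $(1-\beta(s))\psi(\beta(s))$, which is continuous in $s$ because $\psi$ is continuous and $\beta$ is (being differentiable). The initial value $\beta(0)=0$ is the imposed condition.

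For boundedness and monotonicity, the key tool is ODE uniqueness. Since $\psi$ is locally Lipschitz, the right-hand side $(1-\beta)\psi(\beta)$ is locally Lipschitz in $\beta$, so solutions of (\ref{eq:beta}) cannot cross. Because $\beta\equiv 1$ is itself a solution and $\beta(0)=0<1$, this forces $\beta(s)<1$ for all $s\ge 0$, which simultaneously yields global existence on $[0,\infty)$ by boundedness. For the nontrivial branch (implicitly selected by the lemma; note that if $\psi(0)=0$ then $\beta\equiv 0$ would also satisfy the IVP, so we must read the hypotheses as picking out the non-stationary solution), we have $\beta(s)\in(0,1)$ for every $s>0$. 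Strict monotonicity then drops out of the ODE: $\dot\beta(s)=(1-\beta(s))\psi(\beta(s))>0$ since both factors are positive.

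Finally, for the limit, monotone convergence yields $L:=\lim_{s\to\infty}\beta(s)\in(0,1]$. I would rule out $L<1$ by contradiction: on the compact interval $[\beta(1),L]$ the continuous function $\psi$ attains a positive minimum $m>0$, so $\dot\beta(s)\ge m(1-L)>0$ for all $s\ge 1$, which by integration drives $\beta$ above any bound and contradicts $\beta(s)\le L<1$. Hence $L=1$. The main subtlety in the whole argument is the selection of the nontrivial solution at $s=0$; once that is settled, everything reduces to a clean application of uniqueness and a monotone-bounded-sequence argument.
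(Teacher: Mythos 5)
The paper states Lemma~\ref{lem:beta} without proof, so there is no argument of the authors' to compare against; judged on its own, your plan is correct and uses exactly the tools one would expect: $C^1$-regularity read off from the ODE, non-crossing with the equilibrium solution $\beta\equiv 1$ via local Lipschitz uniqueness, sign analysis of the right-hand side for strict monotonicity, and a monotone-bounded-limit argument with a positive lower bound on $\dot\beta$ to force the limit to be $1$. The one point to sharpen is the issue you yourself flag at $s=0$. Since $(1-\beta)\psi(\beta)$ is locally Lipschitz in $\beta$, the initial value problem (\ref{eq:beta}) has a \emph{unique} solution; hence if $\psi(0)=0$ that unique solution is $\beta\equiv 0$, and there is no ``non-stationary branch'' available to select --- the lemma is then simply false as stated, and your proposed reading cannot rescue it. The correct repair is not a choice of branch but the additional hypothesis $\psi(0)>0$ (satisfied by the paper's intended instances $\psi(s)=\alpha$ and $\psi(s)=\alpha(1+s)$ from Remark~\ref{rem:vw}), which gives $\dot\beta(0)=\psi(0)>0$, pushes $\beta$ into $(0,1)$ immediately, and lets the rest of your argument go through verbatim. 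A second, cosmetic point: the upper bound $\beta<1$ alone does not yield global existence; you also need the lower bound $\beta\ge 0$, which comes from the monotonicity you establish afterwards, so either reorder those two steps or argue directly that $\beta$ is trapped in $[0,1)$ because $\dot\beta>0$ whenever $\beta\in[0,1)$ once $\psi(0)>0$ is assumed.
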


With $V$ defined by (\ref{eq:V}) and satisfying the conditions in Proposition \ref{prop:V}, let 
\begin{equation}
    \label{eq:W}
W(x) = \left\{\begin{aligned}
&\beta(V(x)),\text{ if } V(x)<\infty,\\
&1, \quad  \text{otherwise},
\end{aligned}\right.
\end{equation}
where $\beta:\,[0,\infty)\ra\Real$ satisfies (\ref{eq:beta}). 

We can verify the following properties for $W$. 

\begin{prop}\label{prop:W}
The function $W:\,\Real^n\ra\Real$ defined by (\ref{eq:W}) satisfies the conditions in Theorem \ref{thm:zubov} on $\D(\A)$.  Furthermore, $W$ is continuous on $\Real^n$.  
\end{prop}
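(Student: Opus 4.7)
The proof will verify each of the five conditions in Theorem \ref{thm:zubov} for $W$ defined by \eqref{eq:W}, then address global continuity. The key computation is that the scalar transformation $\beta$ satisfying \eqref{eq:beta} is precisely chosen so that the chain rule produces the Zubov-type right-hand side; concretely,
\begin{equation*}
\dot W(x) = \beta'(V(x))\dot V(x) = (1-\beta(V(x)))\,\psi(\beta(V(x)))\cdot(-\omega(x)),
\end{equation*}
so condition 5 holds with $\Psi(x) = \psi(W(x))\omega(x)$, and condition 2 follows because $\omega$ is positive definite with respect to $\A$ and $\psi(s)>0$ for $s>0$ (with $W(x)>0$ off $\A$).

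Next I would handle conditions 1 and 4 using the monotonicity of $\beta$ and Proposition \ref{prop:V}. Since $\beta(0)=0$, $\beta$ is strictly increasing, and $\beta(s)\to 1$ as $s\to\infty$ by Lemma \ref{lem:beta}, and $V$ is positive definite with $V\equiv 0$ on $\A$, condition 1 is immediate. For condition 4, the blow-up property $V(x)\to\infty$ as $x\to\partial\D(\A)$ from Proposition \ref{prop:V} composes with $\beta(s)\to 1$ to give $W(x)\to 1$.

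Condition 3 requires a uniform lower bound on $W$ and $\Psi$ on the set $\{x \in \D(\A) : \norm{x}_\A \ge c_3\}$. For $W$, I would argue by contradiction: if a sequence $x_n$ in this set satisfies $W(x_n)\to 0$, then $V(x_n)\to 0$; using Assumption 2 ($\D(\A)\subset X$ compact), extract a convergent subsequence $x_n\to x^*$. If $x^* \in \D(\A)$, continuity of $V$ forces $x^*\in\A$, contradicting $\norm{x^*}_\A \ge c_3$; if $x^* \in \partial \D(\A)$, Proposition \ref{prop:V}(2) gives $V(x_n)\to\infty$, a contradiction. A nearly identical argument, now using positive definiteness of $\omega$, yields the lower bound on $\Psi = \psi(W)\omega$.

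Finally, for continuity of $W$ on all of $\Real^n$, continuity on $\D(\A)$ follows from $W=\beta\circ V$ with both maps continuous (Proposition \ref{prop:V}(4) and Lemma \ref{lem:beta}), and on the open exterior $\Real^n\setminus\overline{\D(\A)}$, $W\equiv 1$. The delicate case is a point $y\in\partial\D(\A)$: for any sequence $x_n\to y$, split into subsequences lying in $\D(\A)$ (handled by condition 4) and in $\Real^n\setminus\D(\A)$ (where $W\equiv 1$), both yielding $W(x_n)\to 1=W(y)$. I expect this boundary-continuity step, together with the compactness/blow-up dichotomy needed for condition 3, to be the main technical obstacle, since everything else reduces to the chain rule and positive-definiteness bookkeeping.
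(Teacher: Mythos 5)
Your proposal is correct, and for conditions 1, 2, 4, and 5 it coincides with the paper's proof: those items are dispatched by composing Proposition~\ref{prop:V} with Lemma~\ref{lem:beta}, and the chain-rule computation $\dot W=\beta'(V)\dot V=-(1-W)\psi(W)\omega$ is exactly the paper's verification of item~5. Where you genuinely diverge is condition~3. You obtain the uniform lower bound on $W$ by contradiction, extracting a convergent subsequence from $\set{x\in\D(\A):\norm{x}_\A\ge c_3}$ via the compactness of $X$ (Assumption~2) and ruling out limit points in $\D(\A)$ (continuity and positive definiteness of $V$) and on $\partial\D(\A)$ (the blow-up in Proposition~\ref{prop:V}). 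The paper instead argues quantitatively: since $f$ is locally Lipschitz, hence bounded on the compact annulus $\set{\delta\le\norm{x}_\A\le 2\delta}$, every trajectory needs at least time $\tau>0$ to cross from $\set{\norm{x}_\A=2\delta}$ to $\set{\norm{x}_\A=\delta}$, during which $\omega>c$ by Assumption~\ref{as:finite}; hence $V(x)\ge c\tau$ and $W(x)\ge \beta(c\tau)$ whenever $\norm{x}_\A>2\delta$. The paper's route yields an explicit constant and does not invoke compactness of $X$; yours is softer but shorter, and both are valid. One small caveat: your bound on $\Psi=\psi(W)\omega$ needs more than ``positive definiteness of $\omega$'' --- since $\psi(0)$ may vanish, you must combine the uniform lower bound $\omega>c$ on $\set{\norm{x}_\A\ge c_3}$ (Assumption~\ref{as:finite}, item~1) with the already-established bound $W\ge c_1$ to keep $\psi(W)$ away from zero; your compactness dichotomy does deliver this, but it is worth stating. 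The boundary-continuity argument is the same in both proofs, merely spelled out in more detail in yours.
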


\begin{proof}
Items (1), (2), (4) of Theorem \ref{thm:zubov} directly follow from Proposition \ref{prop:V} and Lemma \ref{lem:beta}. To verify item (3), by Assumption \ref{as:finite}, we know 
for any $\delta>0$, there exists $c>0$ such that $\omega(x)>c$ for all $\norm{x}_\A>\delta$. By local Lipschitz of $f$, it takes at least $\tau>0$ time for any solutions to reach from the level set $\set{\norm{x}_A=2\delta}$ to $\set{\norm{x}_A=\delta}$. Hence for $V(x)\ge c\tau$ for all $x$ such that $\norm{x}_\A>2\delta$. By the properties of $\beta$, item (3) of Theorem \ref{thm:zubov} holds for $W$. Continuity of $W$ follows from its definition outside $\D(\A)$ and item (4) of Theorem \ref{thm:zubov}. 

To check (5), taking the derivative of $W(x)$ along solutions of (\ref{eq:sys}) gives
\begin{align*}
    \dot{W}(x)& = \dot{\beta}\dot{V} = -(1-\beta(V(x)))\psi(\beta(V))\omega(x) \\
    &= -(1-W(x))\Phi(x),
\end{align*}
where $\Phi(x) = \psi(\beta(V))\omega(x)$. 
\end{proof}

\begin{rem}\em 
    The constructions in Remark \ref{rem:vw} are special cases of Proposition \ref{prop:W}. In particular, (\ref{eq:V2W}) is given by setting $\psi(s)=\alpha$ for some constant $\alpha>0$ and (\ref{eq:V2W2}) is given by $\psi(s) = \alpha(1+s)$. 
\end{rem}

\begin{ex}[A concrete example of $W$]
    Consider the scalar system
    $\dot x = -x + x^3$. It has three equilibrium points at $\set{0,\pm 1}$. The equilibrium point $\A = \set{0}$ is uniformly asymptotically stable with $\D(\A)=(-1,1)$. Consider 
    $$
    V(x) = \int_0^\infty \abs{\phi(t,x)}^{2}dt,
    $$
    where $\beta>0$. By Proposition \ref{prop:V}, we have
    $$
    \dot V(x) = V'(x)(-x+x^3) = - x^2.
    $$
    For $x\in (0,1)$, assuming differentiability of $V$, we have
    $$
    V'(x) = \frac{x}{1-x^2}.
    $$
    Integrating this with the condition $V(0)=0$ gives 
    $$
    V(x) = -\frac12 \ln(1-x^2).
    $$
    Taking $W(x)=1-\exp(-\alpha V(x))$ as in (\ref{eq:V2W}), we obtain
    $$
    W(x) = 1 - (1-x^2)^{\frac{\alpha}{2}},\quad \alpha>0. 
    $$
    One can easily verify that $W(x)$ satisfies the conditions in Zubov's theorem \ref{DV:zubov}. 
\end{ex}

\section{Training Lyapunov Functions with Physics-Informed Neural Networks}\label{sec:algorithm}

Given the constructions of $V(x)$ and $W(x)$ in Section \ref{sec:construct}, we propose a physics-informed neural network (PINN) approach to train neural Lyapunov functions that can approximately capture the domain of attraction. Furthermore, the PINN approach can be augmented with a data-driven approach to improve its performance. 

In Algorithm \ref{alg:pinn}, 
a neural network is trained to solve the Zubov's PDE with a particular choice of $\Psi(x)$. For the numerical examples in this paper, we choose 
\begin{equation}\label{eq:Psi}
\Psi(x) = \alpha(1+W(x))\Phi(x),
\end{equation}
with $\Phi(x)=\norm{x}^2_\A$. The details of constructing the loss function are discussed next. 

Let $W_N(x;\theta)$ be a neural approximation for solving Zubov's PDE (\ref{DV:zubov}). The loss function consists of three terms:
\begin{equation}
    \label{eq:loss}
    \mathcal{L}(\theta) = L_r(\theta)  + L_b(\theta) + L_d(\theta), 
\end{equation}
where $L_r$ is the residual error of the PDE, evaluated over a training set $S=\set{x_i}_{i=1}^{N}\subset X$, where $X$ is assumed to be a compact set on which training takes place. For example, we can choose it to be the mean-square error defined by 
\begin{equation}
    \label{eq:L_r}
    L_r = \frac{1}{N}\sum_{i=1}^N(\nabla_{x} W_N(x_i;\theta) f(x_i) + \Psi(x_i)(1-W_N(x_i;\theta)))^2. 
\end{equation}
The loss $L_b$ captures the boundary conditions. In this case, we want $W(x)=1$ for $x\not \in\D(\A)$ and $W(x)=0$ for all $x\in A$. While it may not be able to precisely define the boundaries, some approximation knowledge suffices. For example, if an over-approximation $U$ of $\D(\A)$ is known, the loss can be defined as 
\begin{equation}
    \label{eq:L_b}
    L_b = \frac{1}{N_b}\sum_{x_i\in S\cap (X\setminus U)}(W_N(x_i;\theta)-1)^2,
\end{equation}
where $N_b$ is the number of points in $S\cap X\setminus U$. If $\A$ is an equilibrium point, adding $W(0)=(0)$ alone may not be ideal. If the equilibrium point is exponentially stable, we know that locally $V(x)$ constructed in (\ref{eq:V}) with $\omega(x)=\norm{x}^2$ is a Lyapunov function for exponential stability in the sense that
\begin{equation} \label{eq:c1Vc2}
c_1\norm{x}^2 \le V(x) \le c_2 \norm{x}^2,
\end{equation}
and 
$$
\dot{V} (x) \le -c_3\norm{x}^3,
$$
for $x$ in a small neighborhood of the origin. Hence it is without loss of generality to assume that (\ref{eq:c1Vc2}) holds locally, which translates to a boundary constraint to $W$ as
\begin{equation} \label{eq:c1Wc2}
\beta(c_1\norm{x}^2) \le W(x) \le \beta(c_2 \norm{x}^2),
\end{equation}
where $\beta$ is given by (\ref{eq:beta}). As discussed in Remark \ref{rem:vw}, we can simply choose 
$\beta(s)=(\tanh(\alpha s))$ or $\beta(s)=1-\exp(-\alpha s)$ for some $\alpha>0$, which needs to be consistent with the PDE that is being solved. For the numerical examples in this paper, we choose $\beta(s)=(\tanh(\alpha s))$, which is consistent with the PDE (\ref{DV:zubov}) with $\Psi$ given by (\ref{eq:Psi}). 

\begin{rem}\em
   Inequality constraints can be encoded in the cost function by defining a loss term $L_+= \max(h,0)$ or $L_-=\min(h,0)$. It is easy to see that $L_+=0$ if and only if $h\le 0$, and similarly, $L_-=0$ if and only if $h\ge 0$. It is suggested in \cite{grune2021computing} that relaxing a Zubov-type equation $\dot{V}(x)=-\norm{x}^2$ to an inequality $\dot{V}(x)\le -\norm{x}^2$ can improve the process of training $V$. However, in our examples, we did not notice any issues using physics-informed neural networks to solve the Zubov PDE directly. Solving the PDE more globally, as opposed to locally in the examples in \cite{grune2021computing}, seems to help the training process. 
\end{rem}

Finally, $L_d$ is a data loss defined by an optional set of data points $S_d=\set{y_i}_i^{N_d}$ on which approximate values $\hat W(y_i)$ for $W(y_i)$ can be obtained. In this case, $W(y_i)$ can be approximated by simulating $\phi(t,y_i)$ for a sufficiently long period of time until it converges to the set $\A$, or when the value of $V(y_i)$ defined by the integral in (\ref{eq:V}) exceeds a certain threshold. To be consistent with the PDE that is being solved, we should use (\ref{eq:V2W2}) to evaluate $\hat W(y_i)$ based on simulated data for $V(y_i)$. Based on the values of $\set{\hat W(y_i)}_i^{N_d}$, the data loss $L_d$ can be defined as 
\begin{equation}
    \label{eq:L_d}
    L_d = \frac{1}{N_d}\sum_{i=1}^{N_d}(W_N(y_i;\theta)-\hat W(y_i))^2.
\end{equation}

\begin{algorithm}
\SetAlgoLined
\caption{Physics-Informed Neural Network (PINN) Training for Solving Zubov's PDE (\ref{DV:zubov})}
\label{alg:pinn}
\SetKwInput{Input}{Input}
\SetKwInput{Output}{Output}
\Input{Training data $S=\set{x_i}_{i=1}^{N}$, $\set{(y_i,\hat{W}(y_i)}_{i=1}^{N_d}$, Zubov's PDE (\ref{DV:zubov}), batch size $B$, learning rate $\eta$, maximum iterations $M$}
\Output{Trained neural Lyapunov function $W_N(x;\theta^*)$}
\BlankLine
Initialize neural network parameters $\theta$\;
Define loss function $\mathcal{L}(\theta)$ as in (\ref{eq:loss})\;
\For{$m = 1, 2, \dots, M$}{
    Randomly sample a batch of $B$ training examples: $S_m = \set{x_i}_{i=1}^{B} \subseteq S$\;
    Compute gradients $\nabla_x W_N(x_i;\theta)$ for $i=1,\dots,B$ using automatic differentiation\;
    Compute gradients $\nabla_\theta \mathcal{L}(\theta)$ using automatic differentiation with respect to the batch of samples $S_m$\;
    Update parameters: $\theta \leftarrow \theta - \eta \nabla_\theta \mathcal{L}(\theta)$\;
    Compute the current loss: $\mathcal{L}_m = \mathcal{L}(\theta)$\;
    \If{$\mathcal{L}_m$ converges to below a threshold}{
        \textbf{break}\;
    }
}
\end{algorithm}

\section{Formal Verification of Neural Lyapunov Functions}

One perceived weakness of neural network approximation is its lack of formal guarantees. To overcome this, formal verification of the learned neural Lyapunov functions is conducted using satisfiability modulo theories (SMT) solvers. In this section, we will discuss the details of verifying neural Lyapunov functions using SMT solvers, and for simplicity, we will limit our discussion to the case of a stable equilibrium point at the origin. We discuss verification of both local and global stabiltiy. 

\subsection{Verification of Local Stability and Region of Attraction}
\label{sec:local}

In this section, we assume that $f$ is continuously differentiable. We further assume that the origin is exponentially stable, i.e., (\ref{eq:sys}) admits a linearization 
\begin{equation}
    \label{eq:linear}
    \dot x = Ax,
\end{equation}
where $A$ is a Hurwitz matrix. Now write the nonlinear system (\ref{eq:sys}) as 
\begin{equation}
    \label{eq:nonlinear}
    \dot x = Ax + g(x),
\end{equation}
where $g(x)=f(x)-Ax$ satisfies $\lim_{x\ra 0}\frac{\norm{g(x)}}{\norm{x}}=0$. The linear system (\ref{eq:linear}) has a quadratic Lyapunov function $V(x)=x^TPx$ given by solving $P$ from the Lyapunov equation 
\begin{equation}
    \label{eq:lyap}
    PA +A^T P = -Q,
\end{equation}
for any positive definite matrix $Q$. 
Consider two sets $$
S_{r}:=\set{x\in\Real^n:\,2x^TP g(x)\le  r\norm{x}^2}
$$ 
and 
$$
\Omega_c:=\set{x\in\Real^n:\,x^TPx\le c}.
$$
Suppose that we can choose $c>0$ and $r>0$ such that $\Omega_c\subset S_r$. Then for all $x\in \Omega_c$, we have 
\begin{align*}
\dot{V}(x) & = x^T(PA+A^TP)x + 2x^TPg(x)\\
&= - x^TQx + 2x^TPg(x)\\
& \le (-\lambda_{\min}(Q)+r)\norm{x}^2.
\end{align*}
If $r<\lambda_{\min}(Q)$, then we verified local stability of the origin and obtained $\Omega_c$ as a verified invariant set that is attracted to the origin. To summarize, the conditions required for verifying local stability of the origin and $\Omega_c$ being a local region of attraction are 
\begin{align}
r &< \lambda_{\min}(Q),\label{eq:rQ}\\
x^TPx\le c &\Longrightarrow 2x^TP g(x)\le  r\norm{x}^2.\label{eq:cr}
\end{align}
The inequality (\ref{eq:rQ}) can be readily verified by computing the eigenvalue of $Q$ and picking $r$ accordingly. To verify (\ref{eq:cr}), one could use sums of squares (SOS) or SMT verification. In this paper, we emphasize the use of SMT solvers because it can potentially handle a broader class of nonlinear functions \cite{gao2013dreal}. However, at a glance, inequality (\ref{eq:cr}) may not be amenable to SMT verification because the identity holds trivially at the origin. Hence $\delta$-decidable tools such as dReal \cite{gao2013dreal} may have trouble verifying (\ref{eq:cr}) directly, as the origin always satisfies a $\delta$-weakened version of (\ref{eq:cr}) for any $\delta>0$. Fortunately, a closer examination of the inequality (\ref{eq:cr}) and the assumption on $g$ reveals that, since $
g$ is continuously differentiable, we can leverage this fact to help verify (\ref{eq:cr}). Indeed, by the mean value theorem, we have
\begin{equation}
    P g(x) = P g(x) - P g(0) = \int_0^1 P\cdot Dg(tx)dt\cdot x,
\end{equation}
where 
$Dg$ is the Jacobian of $g$ given by $Dg= Df - A$. Clearly, $Dg(0)=Df(0)-A=0$ by construction. Hence
\begin{equation}
\norm{P g(x)} \le \sup_{0\le t\le 1}\norm{P\cdot  Dg(tx)}\norm{x},
\end{equation}
which implies
$$
 2x^TP g(x) \le 2\sup_{0\le t\le 1}\norm{P \cdot Dg(tx)}\norm{x}^2.
$$
As a result, to verify (\ref{eq:cr}), we just need to verify 
\begin{align}
x^TPx\le c &\Longrightarrow 2\sup_{0\le t\le 1}\norm{P\cdot Dg(tx)}\le r.\label{eq:cr2}
\end{align}
Since $Dg(0)=0$ and $Dg$ is continuous, for any $r>0$, one can always choose $c>0$ sufficiently small such that (\ref{eq:cr2}) can be verified. 

\begin{ex}[Reversed Van der Pol]\label{ex:van_der_pol_local}
Consider the reversed Van der Pol oscillator
\begin{align*}
\dot x_1 &= -x_2, \\
\dot x_2 &= x_1 - (1 - x_1^2)x_2, 
\end{align*}
which has a stable equilibrium point at the origin. The linearization at the origin is given by (\ref{eq:linear}) with $A=\begin{pmatrix} 0 & -1\\ 1 & -1\end{pmatrix}$. Solving the Lyapunov equation (\ref{eq:lyap}) with $Q=I$ gives $P=\begin{pmatrix} 1.5 & -0.5\\ -0.5 & 1\end{pmatrix}$. With $r=0.9999$, we can verify (\ref{eq:cr2}) with $c=0.29$. The ellipsoid $\Omega_c=\set{x\in\Real^n:\,x^TPx\le c}$ provides a verified local region of attraction for the origin.
\end{ex}

\subsection{Verification of Regions of Attraction in the Large}

Having verified a local ROA, we can use the learned neural Lyapunov function $W_{N}$ to enlarge the verified ROA. To do this, we can verifiy the following conditions using SMT:
\begin{align}
c_1\le {W}_N(x) \le c_2 &\Longrightarrow \dot{W}_N \le -\eps,\label{eq:dW}\\
{W}_N(x)\le c_1  &\Longrightarrow x^TPx\le c,\label{eq:WP}
\end{align}
where $\eps>0$, $c_2>c_1>0$, and $c$ and $P$ are from Section \ref{sec:local}. We can immediately show the following result. 

\begin{prop}
    If (\ref{eq:dW}) and (\ref{eq:WP}) hold for all $x\in X$ and the set $\W_{c_2}=\set{x\in X:\, W_N(x)\le c_2}$ does not intersect with the boundary of $X$, then $\W_{c_2}$ is contained in the ROA of the origin. 
\end{prop}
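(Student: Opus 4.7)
The plan is to show (i) that $\W_{c_2}$ is positively invariant and solutions starting in it exist for all $t\ge 0$, and (ii) that every such trajectory enters the verified local region of attraction $\Omega_c$ from Section \ref{sec:local} in finite time, after which convergence to the origin follows from the local analysis already established.

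First I would set up compactness. By Assumption 2, $X$ is compact, and since $W_N$ is continuous (as the output of a neural network with smooth activations), $\W_{c_2}=\set{x\in X:\,W_N(x)\le c_2}$ is closed and bounded, hence compact. The hypothesis $\W_{c_2}\cap \partial X=\emptyset$ then places $\W_{c_2}$ in the interior of $X$, so that trajectories which remain in $\W_{c_2}$ automatically remain in $X$ and the implications (\ref{eq:dW}) and (\ref{eq:WP}) are applicable along them.

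Next I would establish positive invariance. Fix $x_0\in \W_{c_2}$ and let $\tau=\sup\set{t\ge 0:\,\phi(s,x_0)\in \W_{c_2}\text{ for all }s\in[0,t]}$. If $\tau<\infty$, continuity forces $W_N(\phi(\tau,x_0))=c_2$. Since $c_1\le c_2$, condition (\ref{eq:dW}) yields $\dot W_N(\phi(\tau,x_0))\le -\eps<0$, so $W_N$ is strictly decreasing along the trajectory at $\tau$. This contradicts the definition of $\tau$ (values of $W_N$ just before $\tau$ would have to exceed $c_2$). Hence $\tau=\infty$, trajectories stay in the compact set $\W_{c_2}\subset \mathrm{int}(X)$ for all positive time, and in particular exist for all $t\ge 0$.

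Finally I would argue convergence in two cases. If $W_N(x_0)\le c_1$, then (\ref{eq:WP}) gives $x_0\in \Omega_c$, and Section \ref{sec:local} delivers $\phi(t,x_0)\to 0$. Otherwise $c_1<W_N(x_0)\le c_2$. On the interval where $W_N(\phi(t,x_0))\in[c_1,c_2]$, condition (\ref{eq:dW}) gives $W_N(\phi(t,x_0))\le W_N(x_0)-\eps t$, so there must exist $t^*\le (c_2-c_1)/\eps$ with $W_N(\phi(t^*,x_0))\le c_1$. By the invariance argument above $\phi(t^*,x_0)\in X$, so (\ref{eq:WP}) applies and places $\phi(t^*,x_0)\in \Omega_c$, whence the local analysis again concludes $\phi(t,x_0)\to 0$.

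The main obstacle is not any single calculation but rather being careful at the handoff between the two regimes: the descent estimate from (\ref{eq:dW}) only holds in the annular region $\set{c_1\le W_N\le c_2}$, and one must ensure that the trajectory (a) never exits $X$ before reaching $\set{W_N\le c_1}$, which requires the invariance step and the hypothesis $\W_{c_2}\cap\partial X=\emptyset$, and (b) actually enters $\Omega_c$ rather than stalling, which requires the uniform lower bound $\eps$ on the decrease rate together with (\ref{eq:WP}) to translate the $W_N$-sublevel condition into membership in the quadratic sublevel set certified locally.
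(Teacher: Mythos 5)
Your proof is correct and follows essentially the same route as the paper's: establish positive invariance of $\W_{c_2}$ via the strict decrease condition (\ref{eq:dW}) at the level set $W_N=c_2$, then use the uniform decrease rate $\eps$ to force entry into $\set{W_N\le c_1}\subset\Omega_c$ in finite time, handing off to the verified local region of attraction. You simply supply the compactness, invariance, and finite-time-descent details that the paper's terser argument leaves implicit.
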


\begin{proof}
A solution starting from $\W_{c_2}$ remains in $\W_{c_2}$ as long as the solution do not leave $X$. However, to leave $X$ it has to cross the boundary of $\W_{c_2}$ first. This is impossible because of (\ref{eq:dW}). Within $\W_{c_2}$, solutions converge to $\Omega_c=\set{x\in\Real^n:\,x^TPx\le c}$ in finite time, which is a verified region of attraction.  
\end{proof}

\section{Numerical Examples}

In this section, we provide numerical examples to demonstrate the proposed approach for learning and verifying neural Lyapunov functions. For consistency, the reported computation times were recorded on a MacBook Pro equipped with a 2GHz Quad-Core Intel Core i5 processor and 16GB of memory. However, computations testing the feasibility of the work were performed on clusters provided by the Digital Research Alliance of Canada.

\begin{ex}[Reversed Van der Pol, continued]\label{ex:van_der_pol}
Revisit the reversed Van der Pol oscillator in Example \ref{ex:van_der_pol_local}. We know the real domain of attraction of the origin is bounded by a period orbit. To compute a neural Lyapunov function that can approximately capture the true ROA, we use a data-driven mechanism to generate data for the integral 
$$
V(x) = \int_0^\infty \norm{\phi(t,x)}^2dt
$$
as described in Section \ref{sec:algorithm}. A total of $90,000$ uniformly spaced samples on the domain $X=[-2.5,2.5]\times [-3.5,3.5]$ are generated. The total time for data generation is 1,793 seconds. The set $X$ contains the domain of attraction as a subset\footnote{The domain of attraction can be estimated roughly by numerical simulations or observational data. If there is no prior knowledge of $\D(\A)$, we can still arbitrarily specify a set $X$ containing $\A$, assuming we know $\A$. Then we run into a similar situation as in an unbounded domain of attraction. As shown in Example \ref{ex:poly}, although we cannot hope to approximate the entire domain of attraction, we can still obtain verified regions of attraction inside $X$.}. The terminating condition for numerical integration to determine $V$ is either $\norm{\phi(T,x)}\le 10^{-3}$ or $ \int_0^T \norm{\phi(t,x)}^2dt\ge 200$. We then set $W(x)=\tanh(\alpha V(x))$ with $\alpha=0.1$. The parameters chosen are similar to those in \cite{kang2021data}, except that we choose a larger set of samples to demonstrate the limit of the data-driven approach compared to a physics-informed neural network approach that incorporates Zubov's PDE in the training process. The ellipsoidal local region of attraction verified in Example \ref{ex:van_der_pol_local} is shown with the dashed line. 

We trained neural networks of different sizes as detailed in Table \ref{tab:nn_data}. A batch size of 32 was used, and the learning rate was chosen to be $10^{-3}$ after some testing. We terminated training when the mean-square training loss was smaller than $10^{-5}$ or after 200 epochs. The learned neural Lyapunov functions were verified using dReal with its Python binding \cite{gao2013dreal}. The computation times for training and verification are recorded in Table \ref{tab:nn_data}. Figure \ref{fig:ver_der_pol} depicts the result of training and verification with a learned Lyapunov function and a verified region of attraction close to the boundary of the domain of attraction.

In Table \ref{tab:nn_data}, a ``data-driven'' approach refers to a network trained with 90,000 samples without using Zubov's equation explicitly, even though as shown in \ref{sec:construct}, the construction results in a solution to Zubov's equation. This is close to the method proposed in \cite{kang2021data}. A PINN approach refers to learning a neural Lyapunov function only through evaluations of the residual errors at 90,000 training points. No data is required except for the evaluation of loss at these points. A ``PINN + data'' approach refers to a neural network trained with PINN and Zubov's PDE, but also incorporated a data loss in the training process. The number of data points chosen is 900, which is 1\% of the total number of samples in a data-driven approach. It is evident from the results in this table that a PINN+data approach performs the best in terms of obtaining a larger certified region of attraction. 
\end{ex}

\begin{figure}[htbp]
  \centering
  \includegraphics[width=0.49\textwidth]{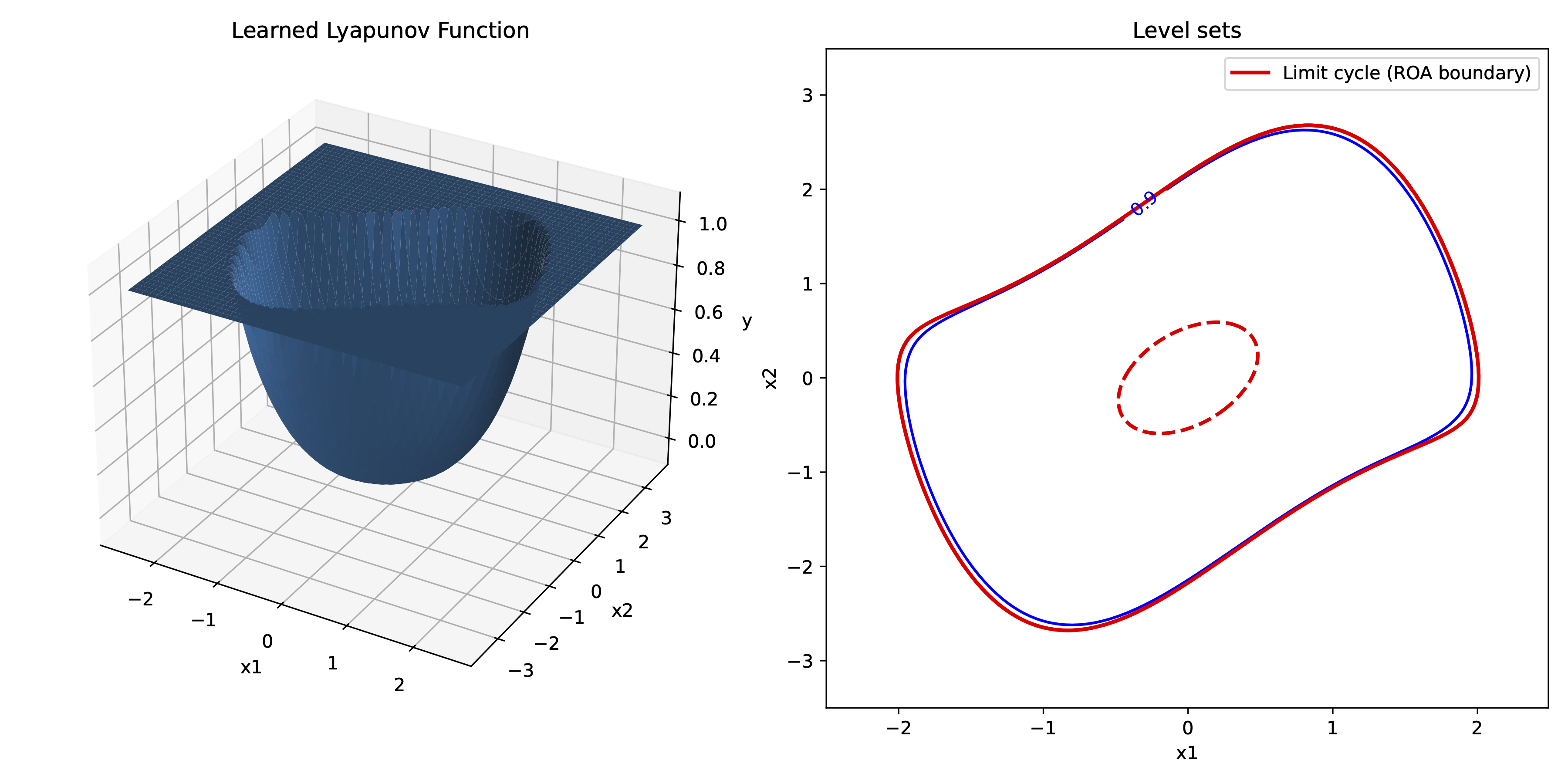}
  \caption{A neural network, consisting of three hidden layers with 10 neurons per layer and trained using Zubov's PDE (\ref{DV:zubov}), is capable of approximately representing a verifiable Lyapunov function that closely approximates the domain of attraction for the stable equilibrium point of the Van der Pol equation. The dashed lines represent a local verified region of attraction computed in Example \ref{ex:van_der_pol_local}. }
  \label{fig:ver_der_pol}
\end{figure}

\begin{table*}[h]
\caption{Verification of Neural Lyapunov Functions for Van der Pol Equation (Example \ref{ex:van_der_pol})}
\centering
\begin{tabular}{|c|c|c|c|c|c|c|c|c|c|c|}
\hline
\textbf{Approaches} & \textbf{Layer} & \textbf{Width} & \makecell{\textbf{No. of}\\ \textbf{params.}} & \makecell{\textbf{Data gen.}\\ \textbf{time}} & \makecell{\textbf{Training}\\ \textbf{time}} & \textbf{Epochs} & \textbf{Final loss} &  \makecell{\textbf{Verification}\\ \textbf{time}} & \makecell{\textbf{Verified level}\\ ($W_N(x)\le c_2$)} & \makecell{\textbf{Volume}\\ \textbf{\%}} \\
\hline
Data-driven & 2 & 10 & 151 & 2,156 (s) & 326 (s) & 200 & $9\times 10^{-5}$ & 46 (s) & 0.7 & 84.93\%\\
\hline
PINN & 2 & 10 & 151 & 0 & 636 (s) & 200 & $11 \times 10^{-5}$ & 24 (s) & 0.84 & 87.20\% \\
\hline
PINN + data (1\%) & 2 & 10 & 151 & 216 (s) & 943 (s) & 200 & $56 \times 10^{-5}$ & 31 (s) & 0.77 & \textbf{87.45\%}\\
\hline
\hline
Data-driven & 2 & 30 & 1,051 & 2,156 (s) & 363 (s) & 200 & $3\times 10^{-5}$ & 3,207  & 0.82 & 92.52\%\\ 
\hline
PINN & 2 & 30 & 1051 & 0 & 729 (s) & 200 & $36 \times 10^{-5}$ & 538 (s) & 0.86 & 91.55\%\\
\hline
PINN + data (1\%) & 2 & 30 & 1051 & 216 (s) & 1,293 (s) & 200 & $15 \times 10^{-5}$ &  842 (s) & 0.86 & \textbf{94.22\%}
\\
\hline
\hline
Data-driven & 3 & 10 & 261 & 2,156 (s) & 381 (s) & 200 & $10^{-5}$ & 51,301 (s) & 0.86 & 95.00\%\\
\hline
PINN & 3 & 10 & 261 & 0 & 129 (s)  & 32 & $9 \times 10^{-6}$ &  8,416 (s) & 0.88 & 93.42\% \\
\hline
PINN + data (1\%) & 3 & 10 & 261 & 216 (s) & 593 (s) & 107 & $8 \times 10^{-6}$ &  24,533 (s) & 0.90 & \textbf{96.31\%}\\
\hline
\end{tabular}
\label{tab:nn_data}
\end{table*}

\begin{ex}[Polynomial system]\label{ex:poly}
We consider the polynomial system in \cite{mauroy2016global}:
\begin{align*}
\dot x_1 &= x_2, \\
\dot x_2 &= -2x_1 + \frac13 x_1^3 - x_2.
\end{align*}
\end{ex}
The origin of this system is known to have an unbounded domain of attraction, delimited by the stable manifolds of the saddle equilibrium points at $(\pm\sqrt{6},0)$. We restricted our computations to the domain $[-6,6]\times [-6,6]$ and similarly generated 90,000 samples. The computational results for training and verification are summarized in Table \ref{tab:nn_data_poly}. Figure \ref{fig:poly} depicts the learned Lyapunov function, and the verified region of attraction using a neural network with three hidden layers of width 10. It can be seen that even though the actual domain of attraction is unbounded, the proposed algorithm approximates the largest region of attraction that can be certified by the choice of neural Lyapunov function. Similar to Example \ref{ex:van_der_pol_local}, we were able to verify a local region of attraction of the form
$\Omega_c=\set{x\in\Real^n:\,x^TPx\le c}$, 
using the SMT solver dReal \cite{gao2013dreal} by verifying (\ref{eq:rQ}) and (\ref{eq:cr2}) with $P=\begin{pmatrix}1.75 & 0.25 \\ 0.25 & 0.75\end{pmatrix}$, $Q=I$, $r=0.9999$, and $c=2.4$. The region $\Omega_c$ is depicted with a dashed line in Figure \ref{fig:poly}. 

\begin{figure}[htbp]
  \centering
\includegraphics[width=0.49\textwidth]{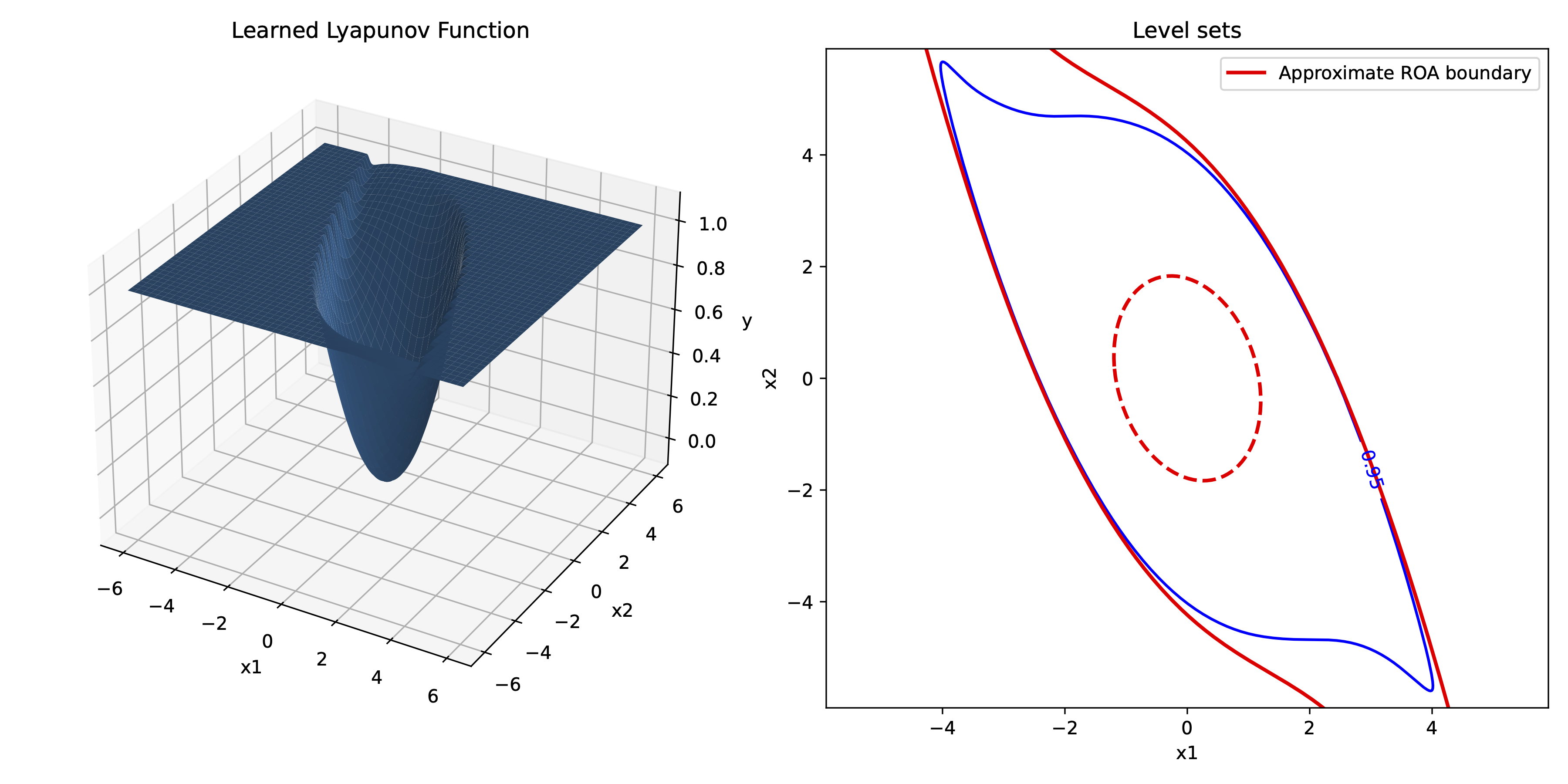}
  \caption{A neural network, consisting of three hidden layers with 10 neurons per layer and trained using Zubov's PDE (\ref{DV:zubov}), is capable of  representing a verifiable Lyapunov function that captures a region of attraction for the stable equilibrium point of the polynomial system in Example \ref{ex:poly} inside the domain $[-6,6]\times [-6,6]$, even though the actual domain of attraction for the origin is unbounded. The dashed lines represent a local verified region of attraction. }
  \label{fig:poly}
\end{figure}

\begin{table*}[h]
\caption{Verification of Neural Lyapunov Functions for Example \ref{ex:poly}}
\centering
\begin{tabular}{|c|c|c|c|c|c|c|c|c|c|c|}
\hline
\textbf{Approaches} & \textbf{Layer} & \textbf{Width} & \makecell{\textbf{No. of}\\ \textbf{params.}} & \makecell{\textbf{Data gen.}\\ \textbf{time}} & \makecell{\textbf{Training}\\ \textbf{time}} & \textbf{Epochs} & \textbf{Final loss} &  \makecell{\textbf{Verification}\\ \textbf{time}} & \makecell{\textbf{Verified level}\\ ($W_N(x)\le c_2$)} & \makecell{\textbf{Volume}\\ \textbf{\%}} \\
\hline
PINN + data & 2 & 10 & 151 & 97 (s) & 989 (s) & 200 & $37 \times 10^{-5}$ &  16 (s) & 0.91 & {74.15\%}\\
\hline
PINN + data & 2 & 30 & 1,051 & 97 (s) & 1,075 (s) & 200  & $2 \times 10^{-5}$ &  295 (s) & 0.95 & {\textbf{85.03}\%}\\
\hline
PINN + data & 3 & 10 & 261 & 97 (s) & 725 (s) & 123 & $6 \times 10^{-6}$ &  2,990 (s) & 0.95 & {84.92\%}\\
\hline
\end{tabular}
\label{tab:nn_data_poly}
\end{table*}

\section{Conclusions}

In this paper, we present a physics-informed neural network approach for learning nearly maximal Lyapunov functions using Zubov's equation. Our work provides theoretical justification on the existence of such Lyapunov functions that can be approximated by neural networks and verified using SMT solvers. The proposed algorithms allow for the computation of Lyapunov functions that can approximate the entire domain of attraction for an asymptotically stable compact set. We also provide formal verification of the Lyapunov candidate generated by the neural network, both locally and globally, which allows us to find verified regions of attraction that are close to the domain of attraction. Our work demonstrates that combining a physics-informed neural network approach with a small number of data points can improve the results of learning and verification.

One potential limitation of the work is its scalability to high-dimensional systems, as is the case with other computational approaches to constructing Lyapunov functions. Future work could focus on investigating the scalability of the proposed approach. It is believed that a compositional structure can help break the curse of dimensionality in learning function approximations using neural networks \cite{poggio2017and}. Both \cite{kang2021data} and \cite{grune2021computing,grune2021overcoming} have discussed this in the context of approximating Lyapunov functions. It would be interesting to investigate whether such compositionality can alleviate the curse of dimensionality in the formal verification of Lyapunov functions as well. Another interesting direction is to provide theoretical guarantees of learning a solution to Zubov's equation if one exists. Combining computation and verification of neural Lyapunov functions with a policy iteration scheme to provide verifiable guarantees of neural policy iteration is another promising research direction.

\bibliographystyle{plain}
\bibliography{cdc23}

\end{document}